
\documentclass[12pt]{amsart}

\usepackage{amsmath}
\usepackage{amssymb}
\usepackage{amsfonts}
\usepackage{amsthm}
\usepackage{enumerate}
\usepackage{hyperref}
\usepackage{graphicx, xcolor}
\usepackage{color}

\textheight=600pt
\textwidth=435pt
\oddsidemargin=17pt
\evensidemargin=17pt

\theoremstyle{plain}
\newtheorem{thm}{Theorem}[section]

\newtheorem{prop}[thm]{Proposition}
\newtheorem{lem}[thm]{Lemma}
\newtheorem{cor}[thm]{Corollary}

\theoremstyle{definition}

\newtheorem{dfns-rems}[thm]{Definitions and Remarks}
\newtheorem{notas-rems}[thm]{Notations and Remarks}
\newtheorem{exmps-rems}[thm]{Examples and Remarks}


\begin{document}


\title[Roman $\{2\}$-domination in graphs \\
and graph products]{Roman $\{2\}$-domination in graphs\\
 and graph products}


\author[F. Alizadeh, H. R. Maimani, L. Parsaei Majd, and M. Rajabi Parsa]{Faezeh Alizadeh}

\address{F. Alizadeh, Mathematics Section, Department of Basic Sciences,
Shahid Rajaee Teacher Training University, P.O. Box 16785-163, Tehran,
Iran.}

\email{alizadeh$_{-}$faezeh@ymail.com}

\author[]{Hamid Reza Maimani}

\address{H. R. Maimani, Mathematics Section, Department of Basic Sciences,
Shahid Rajaee Teacher Training University, P.O. Box 16785-163, Tehran,
Iran.}

\email{maimani@ipm.ir}

\author[]{Leila Parsaei Majd}

\address{L. Parsaei Majd, Mathematics Section, Department of Basic Sciences,
Shahid Rajaee Teacher Training University, P.O. Box 16785-163, Tehran,
Iran.}

\email{leila.parsaei84@yahoo.com}

\author[]{Mina Rajabi Parsa}

\address{M. Rajabi Parsa, Mathematics Section, Department of Basic Sciences,
Shahid Rajaee Teacher Training University, P.O. Box 16785-163, Tehran,
Iran.}

\email{mina.rparsa@gmail.com}


\begin{abstract}
For a graph $G=(V, E)$ of order $n$, a Roman $\{2\}$-dominating function $f: V\rightarrow \{0, 1, 2\}$ has the property that for every vertex $v\in V$ with $f(v)=0$, either $v$ is adjacent to a vertex assigned $2$ under $f$, or $v$ is adjacent to at least two vertices assigned $1$ under $f$. In this paper, we classify all graphs with Roman $\{2\}$-domination number belonging to the set $\{2, 3, 4, n-2, n-1, n\}$. Furthermore, we obtain some results about Roman $\{2\}$-domination number of some graph operations.
\end{abstract}


\subjclass[2010]{Primary: 05C69; Secondary: 05C76}


\keywords{Roman $\{2\}$-domination; Cartesian product; Grid graph. }




\maketitle

\section{Introduction} \label{sec1}
We study \textit{Roman $\{2\}$-dominating} functions defined in \cite{[1]}. We first present some necessary terminology and notation. Let $G = (V,E)$ be a graph with vertex set $V = V (G)$ and edge set $E=E(G)$. The \textit{open neighborhood} $N(v)$ of a vertex $v$ consists of the vertices adjacent to $v$, and its \textit{closed neighborhood} is $N[v]=N(v)\cup\{v\}$. The degree of $v$ is the cardinality of its open neighborhood. Let $\Delta(G)$ be the maximum degree of the graph $G$. If $S$ is a subset of $V$, then $N(S) = \bigcup_{x\in S}N(x)$, $N[S]=\bigcup_{x\in S}N[x]$, and the subgraph induced by $S$ in $G$ is denoted $G[S]$. 

A \textit{dominating set} of $G$ is a subset $S$ of $V$ such that every vertex in $V-S$ has at least one neighbor in $S$, in other words, $N[S]=V$. The \textit{domination number} $\gamma(G)$ is the minimum cardinality of a dominating set of $G$. By \cite{ref8}, a subset $S \subseteq V$ is a $2$-dominating set if every vertex of $V-S$ has at least two neighbors in $S$. The \textit{$2$-domination number} $\gamma_2(G)$ is the minimum cardinality of a $2$-dominating set of $G$. 

Motivated by Stewart's \cite{ref13} article on defending the Roman Empire, Cockayne et al. introduced Roman dominating functions in \cite{ref6}. For Roman domination, each vertex in the graph model corresponds to a location in the Roman Empire, and for protection, legions (armies) are stationed at various locations. A location is protected by a legion stationed there. A location having no legion can be protected by a legion sent from a neighboring location. However, this presents the problem of leaving a location unprotected (without a legion) when its legion is dispatched to a neighboring location. In order to prevent such problems, Emperor Constantine the Great \cite{ref6} decreed that a legion cannot be sent to a neighboring location if it leaves its original station unprotected. In other words, every location with no legion must be adjacent to a location that has at least two legions. This defense strategy prompted the following definition in \cite{ref6}.

A function $f : V (G) \rightarrow \{0, 1, 2\}$ is a \textit{Roman dominating function} (RDF) on $G$ if every vertex $u \in V$ for which $f(u) = 0$ is adjacent to at least one vertex $v$ for which $f(v) = 2$. The weight of an RDF is the value $f(V (G)) = \sum_{u\in V (G)} f(u)$. The \textit{Roman domination number} $\gamma_R(G)$ is the minimum weight of an RDF on $G$.
A vertex $v$ with $f(v) = 0$ is said to be undefended with respect to $f$ if it is not adjacent to a vertex $w$ with $f(w) > 0$.

In this paper, we study Roman $\{2\}$-dominating functions. These functions are closely related to $\{2\}$-dominating functions introduced in \cite{ref7} as follows. For a graph $G$, a \textit{$\{2\}$-dominating function} is a function $f : V \rightarrow \{0, 1, 2\}$ having the property that for every vertex $u \in V$, $f(N[u]) \geq 2$. The weight of a $\{2\}$-dominating function is the sum $f(V ) = \sum_{v\in V} f(v)$, and the minimum weight of a $\{2\}$-dominating function $f$ is the $\{2\}$-domination number, denoted by $\gamma_{\{2\}}(G)$.

A Roman $\{2\}$-dominating function $f$ relaxes the restriction that for every vertex $u \in V$, $f(N[u]) = \sum_{v\in N[u]} f(v) \geq 2$ to only requiring that this property holds for every vertex assigned $0$ under $f$. Formally, a Roman $\{2\}$-dominating function $f : V \rightarrow \{0, 1, 2\}$ has the property that for every vertex $v \in V$ with $f(v) = 0$, $f(N(u)) \geq 2$, that is, either there is a vertex $u \in N(v)$, with $f(u) = 2$, or at least two vertices $x, y \in N(v)$ with $f(x) = f(y) = 1$. In terms of the Roman Empire, this defense strategy requires that every location with no legion has a neighboring location with two legions, or at least two neighboring locations with one legion each. Note that for a Roman $\{2\}$-dominating function $f$, it is possible that $f(N[v]) = 1$ for some vertex with $f(v) = 1$. The \textit{weight} of a Roman $\{2\}$-dominating function $f$ is defined as 
$$w(f) = f(V) = \sum_{v\in V} f(v),$$
and the minimum weight of a Roman $\{2\}$-dominating function $f$ is the Roman $\{2\}$-domination number, denoted by $\gamma_{\{R2\}}(G)$.

\begin{lem}\cite[Corollary 10]{[1]}
for a cycle $C_{n}$ and a path $P_{n}$ we have
\begin{center}
$ \gamma _{\lbrace R2\rbrace }(C_{n})=\lceil \frac{n}{2}\rceil, ~~~~~~    \gamma _{\lbrace R2\rbrace }(P_{n})=\lceil \frac{n+1}{2}\rceil.$
\end{center}
\end{lem}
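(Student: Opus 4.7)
I would prove the upper and lower bounds separately. For the upper bounds, explicit Roman $\{2\}$-dominating functions suffice. On $C_n$ with vertices $v_1,\dots,v_n$, assign $f(v_i)=1$ for odd $i$ and $f(v_i)=0$ otherwise; a case split on the parity of $n$ shows that every zero vertex has two neighbors of value $1$, and the weight is $\lceil n/2\rceil$. On $P_n$, use the same assignment and, in addition, set $f(v_n)=1$ when $n$ is even; the weight becomes $\lceil (n+1)/2\rceil$, and the only vertex whose defense could have failed, $v_n$ in the even case, now carries value $1$ itself.

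For the lower bound on $C_n$, I would use a global double count. Set $n_j=|\{v:f(v)=j\}|$ and $w=n_1+2n_2$. Each zero vertex contributes at least $2$ to the sum $\sum_v f(N[v])$, each one vertex contributes at least $1$, and each two vertex contributes at least $2$, so
\[
\sum_{v\in V}f(N[v])\;\ge\;2n_0+n_1+2n_2\;=\;2n-n_1.
\]
On the other hand, since $C_n$ is $2$-regular, $\sum_v f(N[v])=\sum_u(\deg u+1)f(u)=3w$. Combining this with $n_1\le w$ gives $3w\ge 2n-w$, hence $w\ge n/2$, so $w\ge \lceil n/2\rceil$.

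For the lower bound on $P_n$, observe first that any Roman $\{2\}$-dominating function on $P_n$ remains one on the cycle $C_n$ obtained by adding the edge $v_1v_n$, because adding edges only enlarges open neighborhoods. Thus $\gamma_{\{R2\}}(P_n)\ge \gamma_{\{R2\}}(C_n)=\lceil n/2\rceil$, which equals $\lceil (n+1)/2\rceil$ when $n$ is odd. The crux is the even case $n=2k$, where one must upgrade the bound by $1$. Suppose for contradiction that some Roman $\{2\}$-dominating function $f$ on $P_n$ has weight $k$. Then $f$ is optimal on $C_n$, so equality must hold throughout the cycle bound above: this forces $n_2=0$, $n_1=k$, every zero vertex to have both cyclic neighbors equal to $1$, and every one vertex to have both cyclic neighbors equal to $0$. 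Consequently the $0$'s and $1$'s strictly alternate around $C_n$, and in each of the two such patterns exactly one of $v_1,v_n$ carries value $0$. On $P_n$ that endpoint has only one neighbor, of value $1$, violating $f(N(v))\ge 2$. This contradiction yields $w\ge k+1=\lceil (n+1)/2\rceil$, completing the proof.

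The main obstacle is precisely the even-$n$ path bound: a direct closed-neighborhood count on $P_n$ loses too much at the two degree-one endpoints and only gives $w\ge (n-2)/2$, so the raw inequality $w\ge\lceil n/2\rceil$ inherited from the cycle is one unit too weak. The remedy is to combine the cycle bound with the rigidity of its extremal configurations, which force an alternating pattern on $C_n$ incompatible with the endpoint structure of $P_n$.
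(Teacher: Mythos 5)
Your proposal is correct. Note, however, that the paper does not prove this lemma at all: it is quoted verbatim from Chellali, Haynes, Hedetniemi and McRae \cite[Corollary 10]{[1]}, so there is no in-paper argument to compare against. What you have supplied is a self-contained proof where the paper has only a citation. Your route is sound: the alternating $1/0$ assignments give the upper bounds; the closed-neighborhood double count $\sum_v f(N[v]) = 3w \geqslant 2n - n_1 \geqslant 2n - w$ gives $w \geqslant \lceil n/2\rceil$ on the cycle; and the extra unit for even paths comes from the rigidity of the extremal cycle configurations (equality forces $n_2=0$, $n_1=w$, and strict alternation of $0$'s and $1$'s, which is incompatible with a degree-one endpoint carrying a $0$). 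This is a genuinely clean argument and arguably more transparent than the treatment in \cite{[1]}, which obtains the values via general relations between $\gamma_{\{R2\}}$ and other domination parameters. Two small housekeeping points if you were to write this up: the cycle argument tacitly assumes $n\geqslant 3$ (so that $C_n$ is a simple $2$-regular graph), and $P_1$, $P_2$ should be checked directly since the ``close up the path into a cycle'' step degenerates there; both checks are immediate.
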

\begin{prop}\cite[Proposition 5]{[1]}
For every graph $G$; $\gamma_{\{R2\}}(G)\leqslant \gamma_{2}(G)$.
\end{prop}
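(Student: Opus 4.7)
The plan is to exhibit, starting from a minimum $2$-dominating set, an explicit Roman $\{2\}$-dominating function whose weight equals $\gamma_{2}(G)$; the inequality then follows at once from the definition of $\gamma_{\{R2\}}(G)$ as an infimum of weights.

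Concretely, I would let $S \subseteq V(G)$ be a $2$-dominating set of $G$ with $|S| = \gamma_{2}(G)$, and define $f : V(G) \to \{0,1,2\}$ by $f(v) = 1$ for $v \in S$ and $f(v) = 0$ for $v \in V(G) \setminus S$. The weight of $f$ is clearly $f(V(G)) = |S| = \gamma_{2}(G)$, so the only thing to check is that $f$ is a Roman $\{2\}$-dominating function.

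For the verification, pick any $v \in V(G)$ with $f(v) = 0$. Then $v \notin S$, and since $S$ is $2$-dominating, $v$ has at least two neighbors $x, y \in S$. By construction $f(x) = f(y) = 1$, so $f(N(v)) \geq 2$, which is precisely the Roman $\{2\}$-domination condition at $v$. Hence $f$ is a Roman $\{2\}$-dominating function of weight $\gamma_{2}(G)$, giving $\gamma_{\{R2\}}(G) \leq \gamma_{2}(G)$.

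There is essentially no obstacle here: the inequality is structural, reflecting the fact that the Roman $\{2\}$ condition is a weakening of the $2$-domination condition (it is only required at vertices labeled $0$, and moreover it can be satisfied by a single neighbor labeled $2$). The only modest subtlety is to notice that one does not need to use the value $2$ at all in the construction; assigning $1$'s on a minimum $2$-dominating set already suffices.
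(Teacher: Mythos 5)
Your proof is correct and is exactly the standard argument: the paper itself gives no proof (it cites Proposition 5 of Chellali et al.), and the argument there is the same — label a minimum $2$-dominating set with $1$'s, label everything else $0$, and observe that every $0$-vertex sees two $1$'s. Nothing to add.
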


For graphs $G$ and $H$, The \textit{join} of graphs $G$ and $H$ is the graph $G\vee H$ with
the vertex set $V=V(G)\cup V(H)$ and the edge set $E=E(G) \cup E(H) \cup \{uv : u\in V(G), v\in V(H)\}$. 

The \textit{Corona} $G[H]$ of $G$ and $H$ is constructed as follows: \\
Choose a labeling of the vertices of $G$ with labels
$1, 2, \ldots, n$. Take one copy of $G$ and $n$ disjoint copies of $H$, labeled $H_1, \ldots, H_n$,
and connect each vertex of $H_i$ to vertex $i$ of $G$.

The \textit{Cartesian product} of two graphs $G$ and $H$, denoted
by $G\square H$, has vertex set
$V(G\square H) = V (G) \times V(H)$, where two distinct vertices $(u, v)$ and $(x, y)$ of $G\square H$ are adjacent if either
$$u=x ~\text{and} ~vy \in E(H) ~\text{or} ~ v = y ~\text{and} ~ux \in E(G).$$

The \textit{grid} graph $G_{m, n}$ is the Cartesian product of $P_m$ and $P_n$.
In 1983, Jacobson and Kinch \cite{[26]} established the exact values of $\gamma(G_{m,n})$ for $2 \leqslant m \leqslant 4$ which are the first results on the domination number of grids. Also, In 1993, Chang and Clark \cite{[6]} found those
of $\gamma(G_{m,n})$ for $m=5$ and $6$. Fischer found those of $\gamma(G_{m,n})$ for $m\leqslant 21$ (see Goncalves et al. \cite{[16]}). Recently, Goncalves et al. \cite{[16]} finished the computation of $\gamma(G_{m,n})$ when $24\leqslant m\leqslant n$. 
In \cite{new}, the authors have obtained the values of $\gamma_{2}(G_{m,n})$ for $2\leqslant m\leqslant 4$.
In this paper, we will give some boundaries for $\gamma_{\{R2\}}(G_{m,n})$ for $2\leqslant m\leqslant 4$.

\section{Graphs with small or large Roman $\{2\}$-domination number} \label{sec2}
In this section we provide a characterization of all connected graphs $G$ of order $n$ with Roman $\{2\}$-domination number belonging to $\{2, 3, 4, n-2, n-1, n\}$. 
Let $f=(V_0, V_1, V_2)$ be a function $f : V \rightarrow \{0, 1, 2\}$ on a graph $G = (V, E)$, where $V_i = \{v \vert f(v)=i \}$ for $i \in \{0, 1, 2\}$.

\begin{prop}\label{2-roman2}
Let $G$ be a graph. $\gamma_{\{R2\}}(G)=2$ if and only if
$G=\overline{K_t}\vee H$ for $t=1, 2$ and for some graph $H$.
\end{prop}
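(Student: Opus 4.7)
The plan is to establish each direction of the equivalence by a direct unpacking of the definition of a Roman $\{2\}$-dominating function.

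For the forward direction, I would start with a minimum-weight Roman $\{2\}$-dominating function $f$ on $G$, so $f(V(G))=2$. Since $f$ takes values in $\{0,1,2\}$, there are only two possibilities: either $f(v)=2$ for a unique vertex $v$ with all others assigned $0$, or $f(u)=f(w)=1$ for exactly two vertices $u,w$ with all others assigned $0$. In the former case, each vertex distinct from $v$ has value $0$ and must have a neighbor of value $2$, so must be adjacent to $v$; hence $v$ is a universal vertex and $G=\overline{K_1}\vee(G-v)$. In the latter case, each vertex outside $\{u,w\}$ has value $0$ and requires $f(N(\cdot))\ge 2$, which forces it to be adjacent to both $u$ and $w$. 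If $uw\notin E(G)$, then $G=\overline{K_2}\vee(G-\{u,w\})$. If $uw\in E(G)$, then each of $u,w$ is adjacent to every other vertex of $G$, so $G$ has a universal vertex and is again of the form $\overline{K_1}\vee H$.

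For the reverse direction, I would first record that a Roman $\{2\}$-dominating function of weight $1$ cannot exist on any graph of order at least $2$: a single vertex assigned the value $1$ contributes at most $1$ to $f(N(x))$ for any neighbor $x$, so the condition $f(N(x))\ge 2$ cannot be met. Thus $\gamma_{\{R2\}}(G)\ge 2$ whenever $|V(G)|\ge 2$. It then remains to exhibit functions of weight $2$: if $G=\overline{K_1}\vee H$, assign value $2$ to the universal vertex from $\overline{K_1}$ and $0$ elsewhere; if $G=\overline{K_2}\vee H$, assign value $1$ to each of the two non-adjacent universal vertices from $\overline{K_2}$ and $0$ elsewhere. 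In both constructions, verifying that the resulting function is Roman $\{2\}$-dominating is immediate from the join structure, since every remaining vertex is adjacent to the supports of $f$ in the required way.

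The only real subtlety is merging the adjacent and non-adjacent subcases in the forward direction: when the two $1$-valued vertices happen to be adjacent, each of them becomes universal, so $G$ sits inside the $n=1$ family rather than giving a distinct $n=2$ conclusion. Aside from this piece of bookkeeping, the argument is a routine consequence of the definition together with the already-noted impossibility of weight $1$.
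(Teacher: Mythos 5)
Your argument is correct and follows essentially the same route as the paper's proof: a case split on whether the weight $2$ is realized by one vertex labelled $2$ or two vertices labelled $1$, with the adjacent-pair subcase folding into the universal-vertex ($n=1$) family. The only difference is that you spell out the converse (impossibility of weight $1$ and the explicit weight-$2$ assignments), which the paper leaves as ``not hard to see.''
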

\begin{proof}
Let $f=(V_0, V_1, V_2)$ be a $\gamma_{\{R2\}}(G)$-function with weight $2$. Hence, we have two cases. If there exists a vertex $z$ with $z\in V_2$, then all other vertices of $G$ are adjacent to $z$. Therefore, $G=\overline{K_1} \vee H$ for some graph $H$. If there are two vertices $u$ and $v$ in $V_1$, then all other vertices of $G$ are adjacent to both vertices $u$ and $v$. If $u$ and $v$ are adjacent, then $G=\overline{K_1} \vee H$ for some graph $H$, and if $u$ and $v$ are not adjacent, then $G=\overline{K_2} \vee H$ for some induced subgraph $H$ of $G$. Conversely, it is not hard to see the result.
\end{proof}
For a graph $G$, define $N_{i}(G)$ for $i=1, \ldots, n-1$ as follows,
$$N_{i}(G)=\{v\in V: deg(v)=i\}.$$

\begin{prop}\label{2-roman3}
Let $G$ be a graph. Then $\gamma_{\{R2\}}(G)=3$ if and only if one of the following holds:
\begin{enumerate}
\item[(i)]
$\Delta(G)=n-2$ and $N_{n-2}(G)$ is a clique,

\item[(ii)]
$\Delta(G)< n-2$ and $\gamma_2(G)=3$.
\end{enumerate}
\end{prop}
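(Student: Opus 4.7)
The plan is to prove the equivalence in both directions, leaning on the preceding Proposition~\ref{2-roman2} (which characterizes $\gamma_{\{R2\}}(G) = 2$) and on the earlier bound $\gamma_{\{R2\}}(G) \leq \gamma_2(G)$. The crucial observation extracted from Proposition~\ref{2-roman2} is that $\gamma_{\{R2\}}(G) \geq 3$ precisely when $G$ has no vertex of degree $n-1$ and no two non-adjacent vertices of degree $n-2$.

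For the forward direction, suppose $\gamma_{\{R2\}}(G) = 3$. Then, by the above observation, $\Delta(G) \leq n-2$ and $N_{n-2}(G)$ is a clique, since any two non-adjacent vertices there would, via the function assigning each of them the value $1$, yield a Roman $\{2\}$-dominating function of weight $2$. If $\Delta(G) = n-2$, condition~(i) holds. If $\Delta(G) < n-2$, I would take an optimal weight-$3$ function $f = (V_0, V_1, V_2)$ and consider the only two arithmetic possibilities $(|V_1|, |V_2|) \in \{(1,1), (3,0)\}$. The pattern $(1,1)$ is impossible: the unique vertex $u$ with $f(u) = 2$ would have to dominate every vertex of $V_0$ (since the single vertex of value $1$ cannot supply a second unit to any $w \in V_0$), forcing $\deg(u) \geq n-2$ and contradicting $\Delta(G) < n-2$. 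Thus $V_1 = \{x,y,z\}$, and by the Roman $\{2\}$-condition every $w \in V_0$ is adjacent to at least two of $x,y,z$; hence $\{x,y,z\}$ is a $2$-dominating set of size $3$. This yields $\gamma_2(G) \leq 3$, and combined with $\gamma_2(G) \geq \gamma_{\{R2\}}(G) = 3$ we conclude $\gamma_2(G) = 3$, establishing~(ii).

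For the reverse direction, under~(i) I would exhibit an explicit witness: let $u \in N_{n-2}(G)$ with unique non-neighbor $v$, and set $f(u) = 2$, $f(v) = 1$, and $f \equiv 0$ elsewhere. Every remaining vertex is adjacent to $u$, so $f$ is a Roman $\{2\}$-dominating function of weight $3$. The matching lower bound $\gamma_{\{R2\}}(G) \geq 3$ follows from Proposition~\ref{2-roman2} because $\Delta(G) = n-2$ rules out a vertex of degree $n-1$ and the clique condition rules out two non-adjacent vertices of degree $n-2$. Under~(ii), the upper bound $\gamma_{\{R2\}}(G) \leq 3$ is immediate from $\gamma_{\{R2\}}(G) \leq \gamma_2(G) = 3$, and the lower bound again follows from Proposition~\ref{2-roman2} because $\Delta(G) < n-2$ excludes both weight-$2$ configurations.

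The main obstacle is largely organizational: correctly enumerating the two possible patterns $(|V_1|,|V_2|) \in \{(1,1),(3,0)\}$ for an optimal weight-$3$ function, and ruling out the $(1,1)$ pattern when $\Delta(G) < n-2$ before extracting the $2$-dominating set from the $(3,0)$ pattern. Once this small case analysis is in place, the rest of the argument reduces to direct applications of the two auxiliary results.
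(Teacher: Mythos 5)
Your proof is correct and follows essentially the same route as the paper's: use Proposition~\ref{2-roman2} to force $\Delta(G)\leq n-2$ and the clique condition, split on whether $\Delta(G)=n-2$, and in the remaining case extract a $2$-dominating set from an optimal weight-$3$ function. You additionally spell out two steps the paper leaves implicit (ruling out the $(|V_1|,|V_2|)=(1,1)$ pattern when $\Delta(G)<n-2$, and the explicit witness for the converse), which is a welcome improvement in rigor but not a different argument.
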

\begin{proof}
Let $f=(V_0, V_1, V_2)$ be a $\gamma_{\{R2\}}(G)$-function with weight $3$. By Proposition \ref{2-roman2}, $\Delta(G)\leqslant n-2$. At first, suppose that $\Delta(G)=n-2$.  Let $\vert N_{n-2}(G)\vert =1$, $v\in N_{n-2}(G)$ and $u\notin N(v)$. Then, set $v\in V_2$ and $u\in V_1$. Now, if $\vert N_{n-2}(G)\vert \geqslant 2$. Consider two vertices $u$ and $v$ in $N_{n-2}(G)$. If $u$ and $v$ are not adjacent, then $u$ and $v$ are adjacent to all other vertices of $G$, and hence $G=\overline{K_2} \vee H$, which is a contradiction by Proposition \ref{2-roman2}. Thus, $N_{n-2}(G)$ is a clique. \\
If $\Delta(G)< n-2$, then there are three vertices $u, v$ and $w$ in $V_1$. Hence, $\{u, v, w\}$ is a $2$-dominating set, so $\gamma_2(G)\leqslant 3$. Since $\gamma_{\{R2\}}(G)=3$, we have $\gamma_2(G)\geqslant 3$. So, $\gamma_2(G)=3$. The converse proof can be easily checked.
\end{proof}
\begin{prop}\label{dom4}
Let $G$ be a graph. Then $\gamma_{\{R2\}}(G)=4$ if and only if $\Delta(G)\leqslant n-3$ and $\gamma_{2}(G)\geqslant 4$ as well as $G$ satisfies one of the following conditions, 
\begin{enumerate}
\item[(i)]
$\gamma(G)=2$,
\item[(ii)]
$\gamma_{2}(G)=4$,
\item[(iii)]
There exists a vertex $v\in V(G)$ such that $\gamma_{2}(G[V(G)-N[v]])=2$.
\end{enumerate}
\end{prop}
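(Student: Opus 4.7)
The plan is to handle both directions by first using Propositions~\ref{2-roman2} and \ref{2-roman3} together with the inequality $\gamma_{\{R2\}}(G)\leq\gamma_2(G)$ to establish the baseline conditions $\Delta(G)\leq n-3$ and $\gamma_2(G)\geq 4$, and then to match the possible size profiles of a minimum Roman $\{2\}$-dominating function against conditions (i)--(iii).

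For the forward direction, assume $\gamma_{\{R2\}}(G)=4$. Since $\gamma_{\{R2\}}(G)\notin\{2,3\}$, Propositions~\ref{2-roman2} and \ref{2-roman3} together force $\Delta(G)\leq n-3$, and monotonicity gives $\gamma_2(G)\geq 4$. Now fix a $\gamma_{\{R2\}}$-function $f=(V_0,V_1,V_2)$ of weight $4$; its profile $(|V_2|,|V_1|)$ must be one of $(2,0)$, $(1,2)$, $(0,4)$. The extremes are immediate: if $(|V_2|,|V_1|)=(2,0)$ then $V_2$ is a dominating set of size $2$, so $\gamma(G)\leq 2$, and since $\Delta(G)<n-1$ we have $\gamma(G)=2$, giving (i); if $(|V_2|,|V_1|)=(0,4)$ then $V_1$ is a $2$-dominating set of size $4$ and hence $\gamma_2(G)=4$, giving (ii).

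The delicate case is $(|V_2|,|V_1|)=(1,2)$; write $V_2=\{v\}$ and $V_1=\{x,y\}$. The Roman $\{2\}$-domination property forces every vertex of $V_0\setminus N(v)$ to be adjacent to both $x$ and $y$. I split on the positions of $x,y$ relative to $N[v]$. If $\{x,y\}\subseteq V-N[v]$, the observation above shows that $\{x,y\}$ is a $2$-dominating set of $G[V-N[v]]$, and since $|V-N[v]|\geq 2$ any such set has cardinality at least $2$, so $\gamma_2(G[V-N[v]])=2$, which is (iii). Otherwise at least one of $x,y$ lies in $N(v)$, and I exhibit an explicit dominating pair: $\{v,x\}$ works when both $x,y\in N(v)$, while $\{v,y\}$ works when $x\in N(v)$ and $y\in V-N[v]$, because in either case the two-neighbor obligation forces every vertex outside $N[v]$ to be adjacent to the partner chosen from $V_1$. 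Hence $\gamma(G)=2$, so (i) holds.

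For the converse, Propositions~\ref{2-roman2} and \ref{2-roman3} immediately give $\gamma_{\{R2\}}(G)\geq 4$ from $\Delta(G)\leq n-3$ and $\gamma_2(G)\geq 4$, since they rule out weights $2$ and $3$. The reverse inequality is obtained by turning each of (i), (ii), (iii) into an explicit Roman $\{2\}$-dominating function of weight $4$: from a dominating set $\{u,w\}$ assign value $2$ to both; from a $2$-dominating set of size $4$ assign value $1$ to each member; and from a $2$-dominating set $\{x,y\}$ of $G[V-N[v]]$ set $f(v)=2$, $f(x)=f(y)=1$, and $0$ elsewhere, verifying the property at each zero-valued $w$ by checking whether $w\in N(v)$ (covered by $v$) or $w\in V-N[v]$ (covered by the pair $\{x,y\}$). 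The main obstacle is the bookkeeping for the $(1,2)$ profile, where one must carefully sort the positions of $x$ and $y$ relative to $N[v]$ and verify that the two-neighbor obligation genuinely yields the claimed size-$2$ dominating sets.
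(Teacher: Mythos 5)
Your proposal is correct and follows the same overall strategy as the paper: rule out weights $2$ and $3$ via Propositions~\ref{2-roman2} and \ref{2-roman3}, split a weight-$4$ function by the profile $(|V_2|,|V_1|)\in\{(2,0),(1,2),(0,4)\}$, match the extremes to (i) and (ii), and prove the converse by exhibiting explicit weight-$4$ functions. The one place you genuinely diverge is the $(1,2)$ profile, and there your treatment is actually more complete than the paper's. The paper asserts outright that $V_1=\{u,w\}$ being forced to $2$-dominate $V_0\setminus N[v]$ yields $\gamma_2(G[V(G)-N[v]])=2$, which implicitly assumes $u,w\in V(G)-N[v]$; if one or both of $u,w$ lie in $N(v)$, the pair is not even a subset of the vertex set of that induced subgraph and the conclusion does not follow as stated. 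You handle exactly those sub-cases by producing the dominating pairs $\{v,x\}$ or $\{v,y\}$ and landing in condition (i) instead, which is the right fix and is consistent with the proposition being a disjunction of (i)--(iii). Your small supporting checks (that $|V-N[v]|\geqslant 2$ follows from $\Delta(G)\leqslant n-3$, so a $2$-dominating set of the induced subgraph has size at least $2$, and that $\gamma(G)\geqslant 2$ follows from $\Delta(G)<n-1$) are correct and fill in details the paper leaves implicit.
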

\begin{proof}
Suppose that $\gamma_{\{R2\}}(G)=4$. By Propositions \ref{2-roman2} and \ref{2-roman3}, we have $\Delta(G)\leqslant n-3$ and $\gamma_{2}(G)\geqslant 4$. Let $f=(V_0, V_1, V_2)$ be a $\gamma_{\{R2\}}(G)$-function. We consider three cases. First case, if $\vert V_2 \vert = 2$, then $\gamma(G)=2$. Second case, $\vert V_1 \vert = 4$, so $\gamma_{2}(G)=4$. Finally, $\vert V_1 \vert = 2$ and $\vert V_2 \vert = 1$. Suppose that $V_1=\{u, w\}$ and $V_2=\{v\}$. Obviously, each vertex in $(V(G)-\{u, w\})-N[v]$ must be connected to both $u$ and $w$. Hence, $\gamma_{2}(G[V(G)-N[v]])=2$.
Conversely, the result is obvious if we have $(i)$ or $(ii)$. Now, suppose that $G$ satisfies $(iii)$. Since $\Delta(G)\leqslant n-3$ and $\gamma_{2}(G)\geqslant 4$, by Propositions \ref{2-roman2} and \ref{2-roman3}, $\gamma_{\{R2\}}(G)\geqslant 4$. On the other hand, assume that $\{u, w\}$ is a $2$-dominating set for $G[V(G)-N[v]]$. If we assign a $2$ to $v$ and a $1$ to $u$ and $w$, we can show that $\gamma_{\{R2\}}(G)\leqslant 4$. Thus, $\gamma_{\{R2\}}(G)=4$.
\end{proof}

\begin{cor}
Let $n_1, n_2, \ldots, n_r$ be the positive integers such that $n_1\leqslant n_2\leqslant \cdots \leqslant n_r$. Then Roman $\{2\}$-domination number of the $n_r$-partite graph $K_{n_1, n_2, \ldots, n_r}$ is as follows:

\begin{equation*}
\gamma_{\{R2\}}(K_{n_1, n_2, \ldots, n_r})= \left\{
\begin{array}{rl}
2 & \quad \text{if}~ n_1=1~\text{or}~2, \\
3 & \quad \text{if}~ n_1=3, \\
4 &  \quad \text{otherwise.}
\end{array} \right.
\end{equation*}
\end{cor}

\begin{prop}
Let $G$ be a connected graph with order $n$. The following statements hold. 
\begin{enumerate}
\item[(a)]
$\gamma_{\{R2\}}(G)= n$ if and only if $G=K_n$ for $n=1, 2$.
\item[(b)]
$\gamma_{\{R2\}}(G)= n-1$ if and only if $G$ is a $C_3$, $P_3$ or $P_4$.
\end{enumerate}
\end{prop}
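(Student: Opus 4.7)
The plan is to reduce both parts to a single degree-based upper-bound construction and then finish via the lemma on paths and cycles. For any graph $G$ of order $n$, I pick a vertex $v$ of maximum degree and set $f(v)=2$, $f(u)=0$ for every $u\in N(v)$, and $f(w)=1$ for every $w\in V(G)\setminus N[v]$. Every $0$-labeled vertex lies in $N(v)$ and is thus adjacent to a vertex of weight $2$, so $f$ is a Roman $\{2\}$-dominating function. Its weight is $2+(n-1-\Delta(G))$, which gives the key estimate
\[
\gamma_{\{R2\}}(G)\le n+1-\Delta(G).
\]

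For part (a), the equalities $\gamma_{\{R2\}}(K_1)=1$ and $\gamma_{\{R2\}}(K_2)=2$ are immediate from the definition (in $K_1$ the sole vertex has empty open neighborhood; in $K_2$ labeling both endpoints with $1$ is optimal). Conversely, if $G$ is connected with $n\ge 3$, then $\Delta(G)\ge 2$, so the estimate above yields $\gamma_{\{R2\}}(G)\le n-1<n$, eliminating every other possibility.

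For part (b), the forward direction is a direct application of the cycle/path lemma: $\gamma_{\{R2\}}(C_3)=\lceil 3/2\rceil=2$, $\gamma_{\{R2\}}(P_3)=\lceil 4/2\rceil=2$, and $\gamma_{\{R2\}}(P_4)=\lceil 5/2\rceil=3$, each of which equals $n-1$. For the converse, assume $\gamma_{\{R2\}}(G)=n-1$. By part (a) we have $n\ge 3$, and the estimate forces $n+1-\Delta(G)\ge n-1$, i.e.\ $\Delta(G)\le 2$. A connected graph of order $n\ge 3$ with $\Delta\le 2$ is either a path $P_n$ or a cycle $C_n$, so I will finish by solving the two ceiling equations from the lemma: $\lceil (n+1)/2\rceil=n-1$ isolates $n\in\{3,4\}$, giving $P_3$ and $P_4$, while $\lceil n/2\rceil=n-1$ isolates $n=3$, giving $C_3$.

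I do not anticipate a genuine obstacle. The construction is standard, the small cases in (a) are trivial, and the converse in (b) reduces to elementary ceiling arithmetic. The only mild point worth verifying is the corner case $\Delta(G)=n-1$ of the construction, where $V(G)\setminus N[v]=\emptyset$ and $f$ collapses to the single label $f(v)=2$; the bound $n+1-\Delta$ still holds, so this causes no trouble.
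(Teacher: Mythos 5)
Your proposal is correct. The explicit construction $f(v)=2$ on a maximum-degree vertex, $0$ on $N(v)$, and $1$ elsewhere is valid (every $0$-vertex sees the $2$), so the bound $\gamma_{\{R2\}}(G)\leqslant n+1-\Delta(G)$ holds, and your case analysis for both directions of (a) and (b) goes through, including the ceiling arithmetic isolating $n\in\{3,4\}$ for paths and $n=3$ for cycles. The paper uses the same degree bound implicitly (it asserts $\Delta(G)\leqslant 1$ in (a) and $\Delta(G)=2$ in (b) without writing the construction down), but it finishes the converse of (b) differently: it takes a $\gamma_{\{R2\}}$-function with $|V_2|$ minimized, argues that $V_2=\emptyset$ and $|V_0|=1$, and then bounds which vertices can have degree $2$ to force $G\in\{C_3,P_3,P_4\}$ directly, without ever invoking the path/cycle formulas. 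Your route instead observes that a connected graph of order $n\geqslant 3$ with $\Delta\leqslant 2$ must be $P_n$ or $C_n$ and then reads off the answer from the quoted lemma $\gamma_{\{R2\}}(P_n)=\lceil (n+1)/2\rceil$, $\gamma_{\{R2\}}(C_n)=\lceil n/2\rceil$. This is arguably cleaner and more rigorous than the paper's somewhat terse structural argument (whose claims ``$V_2=\emptyset$'' and ``$|V_0|=1$'' are left to the reader), at the cost of relying on the external formulas; the paper's version is self-contained but sketchier. Both are valid.
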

\begin{proof}
For $(a)$ it is clear that $\Delta(G)\leqslant 1$. 
For $(b)$, if $G$ is one of the $C_3$, $P_3$ or $P_4$, then the claim is true. Conversely, assume that $\gamma_{\{R2\}}(G)= n-1$. Obviously $\Delta(G)=2$. Among all $\gamma_{\{R2\}}(G)$-functions, let $f=(V_0, V_1, V_2)$ be one with $\vert V_2\vert$ as small as possible. It is easy to see that $V_2=\emptyset$ and $\vert V_0\vert=1$. Suppose that $v\in V_0$ for some vertex $v\in V(G)$, so $deg(v)=2$. Also, each vertex except $v$ can be adjacent to at most one vertex in $V_1$. Hence, the vertices which have the degree $2$ are at most $v$ and $N(v)$. Therefore, we have just three graphs, $C_3$, $P_3$ or $P_4$.
\end{proof}

Now, we need the following graphs in Proposition \ref{n-2}.
$\hat{E}_6$ is a tree obtained from $K_{1,3}$ by subdividing each edge exactly once.
$D_7$ is also a tree obtained from $K_{1,3}$ by subdividing one edge three times, (see \cite{brou-hamer}).
We define the graph $H_2$ such that it is a graph with a $4$-cycle and a path of order $2$ joined to one of the vertices of the $4$-cycle.

\begin{figure}[!htb]
\minipage{0.90\textwidth}
\includegraphics[width=\linewidth]{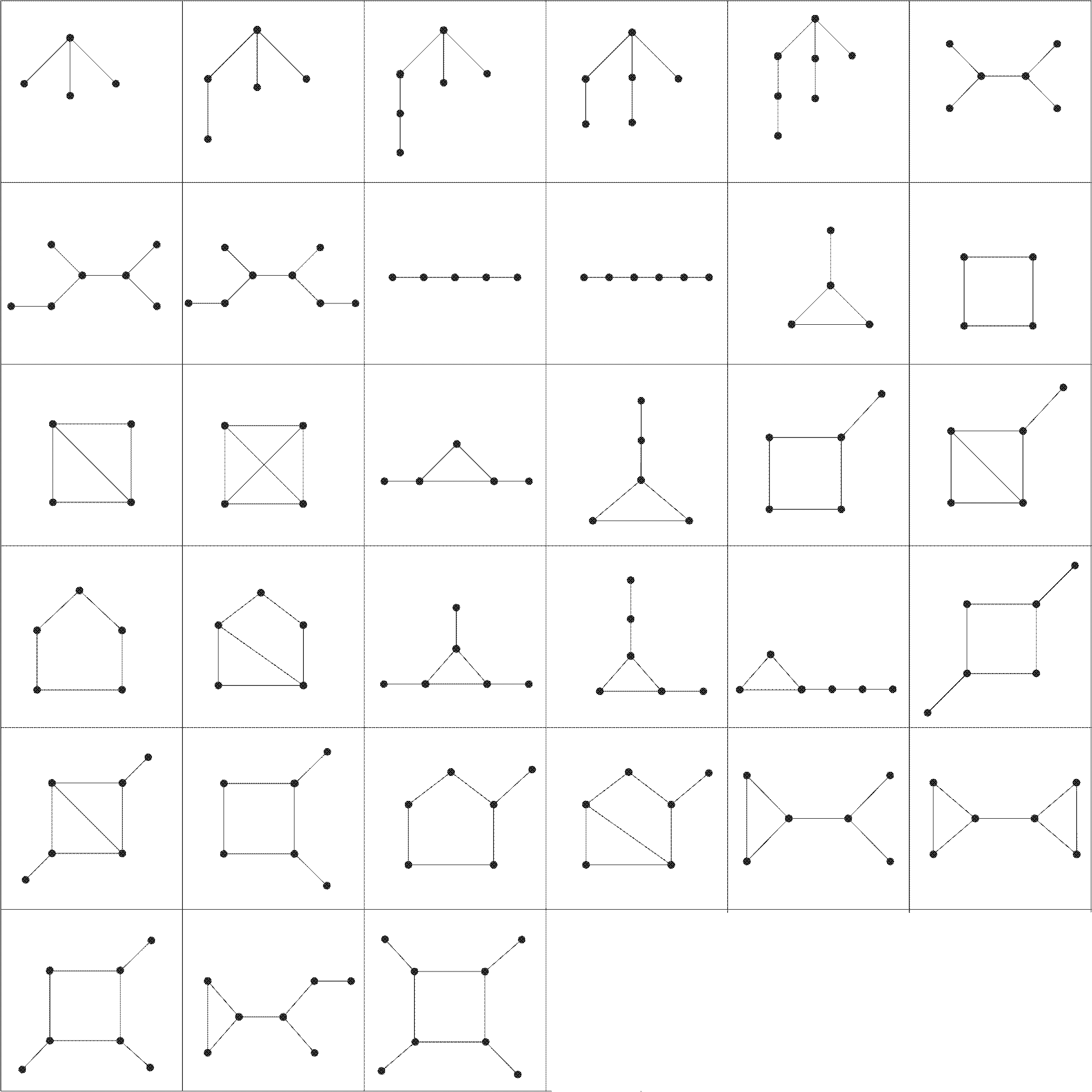}
\caption{}\label{1000}
\endminipage
\end{figure}
\begin{prop}\label{n-2}
Let $G$ be a connected graph with order $n$. Then $\gamma_{\{R2\}}(G)=n-2$ if and only if $G$ is one of the graphs listed in Fig. \ref{1000}.
\end{prop}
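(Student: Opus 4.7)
The plan is to prove both directions of the biconditional, with the reverse direction reducing to a graph-by-graph inspection and the forward direction requiring an extremal-function argument followed by a structural case analysis.

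For the reverse direction, I would verify for each graph $G$ in Figure \ref{1000} two claims: (a) an explicit Roman $\{2\}$-dominating function of weight $n-2$, which is constructed by placing a $2$ on a well-chosen vertex and $1$'s on leaves or other forced positions, and (b) the lower bound $\gamma_{\{R2\}}(G)\geq n-2$, which follows by checking that $G$ satisfies none of the structural hypotheses of Propositions \ref{2-roman2}, \ref{2-roman3}, and \ref{dom4}. Concretely, one verifies $\Delta(G)\leq n-3$, $\gamma_2(G)\geq 4$ when needed, $\gamma(G)\geq 3$ when $n-2\geq 5$, and none of the conditions (i)--(iii) of Proposition \ref{dom4} hold, so that no Roman $\{2\}$-dominating function of weight $\leq n-3$ can exist.

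For the forward direction, suppose $\gamma_{\{R2\}}(G)=n-2$ and let $f=(V_0,V_1,V_2)$ be a $\gamma_{\{R2\}}$-function chosen to minimize $|V_2|$ and then $|V_1|$. The weight equation $2|V_2|+|V_1|=n-2$ together with $|V_0|+|V_1|+|V_2|=n$ yields $|V_0|=|V_2|+2$. The central step is to use the extremality of $f$ to bound $|V_2|$ by a small constant, via a reassignment argument: if some $v\in V_2$ has the property that every $V_0$-neighbor of $v$ is already adequately dominated by another vertex of $V_2\cup V_1$, then one can transfer weight from $v$ to a neighbor in $V_1$ and reduce $|V_2|$ without changing the total weight, contradicting the choice of $f$. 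Once $|V_2|$ is bounded, I would split into the subcases $|V_2|=0,1,2,\ldots$; in each, the condition $f(N(v))\geq 2$ for $v\in V_0$, combined with the connectivity of $G$ and the exclusion of graphs already classified by the earlier propositions, constrains $G$ to one of the shapes drawn in Figure \ref{1000}, including the trees $\hat{E}_6$ and $D_7$ and the graph $H_2$ defined above, together with the short paths and cycles meeting $\gamma_{\{R2\}}=n-2$ (such as $P_5,P_6,C_4,C_5$).

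The main obstacle will be the reassignment argument bounding $|V_2|$. One must carefully track, for each $v\in V_2$, which $V_0$-vertices are \emph{privately} dominated by $v$ (i.e.\ have $v$ as their only high-weight neighbor), and show that these private sets cannot all be large simultaneously without contradicting either the weight identity $|V_0|=|V_2|+2$ or the connectivity of $G$. Once $|V_2|$ is pinned down, the residual analysis is a finite graph-by-graph verification that the only configurations satisfying all the domination inequalities are those listed in Figure \ref{1000}; this is tedious but mechanical, and in particular forces an effective upper bound on $n$ so that the classification reduces to finitely many cases.
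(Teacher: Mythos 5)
Your overall strategy differs from the paper's. The paper's forward direction does not work with an extremal function at all: it derives three explicit necessary conditions --- $\Delta(G)\leqslant 3$ (since a vertex $v$ of degree $d\geqslant 4$ yields the R2DF ``$2$ on $v$, $0$ on $N(v)$, $1$ elsewhere'' of weight $n+1-d\leqslant n-3$), a common-neighbour condition on non-adjacent degree-$3$ pairs, and the exclusion of the subgraphs $P_7$, $C_6$, $\hat{E}_6$, $D_7$, $H_2$ (each of which admits an R2DF of weight three less than its order, which extends by $1$'s to the whole graph) --- and then enumerates the graphs satisfying all three. These conditions are what actually bound the order and shape of $G$.

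The genuine gap in your proposal is precisely at this point. Your private-neighbour argument, done carefully (each $v\in V_2$ must have at least two $V_0$-neighbours that are not dominated without $v$, else you either lower the weight or lower $|V_2|$ at equal weight), combined with the identity $|V_0|=|V_2|+2$, gives $2|V_2|\leqslant |V_2|+2$, hence $|V_2|\leqslant 2$ and $|V_0|\leqslant 4$. But this says nothing about $|V_1|$, so it does \emph{not} ``force an effective upper bound on $n$'': a priori $G$ could be arbitrarily large with almost all vertices labelled $1$ in the optimal function. Ruling that out requires exhibiting cheaper R2DFs on large or high-degree configurations, which is exactly the content of the paper's degree bound and forbidden-subgraph list; your outline has no substitute for it, and without it the ``finite graph-by-graph verification'' never becomes finite. (Two smaller points: your reassignment step as stated --- every $V_0$-neighbour of $v$ dominated without $v$ --- already contradicts minimality of the \emph{weight}, not of $|V_2|$; the weight-preserving transfer is what you need in the one-private-neighbour case. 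And your lower bound in the reverse direction, via Propositions \ref{2-roman2}--\ref{dom4}, only excludes the values $2,3,4$, which suffices only because the graphs in Figure \ref{1000} happen to have order at most $7$; that should be said explicitly.)
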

\begin{proof}
Suppose that $\gamma_{\{R2\}}(G)=n-2$, then the following conditions hold,
\begin{enumerate}
\item[(i)]
$\Delta(G)\leqslant 3$,
\item[(ii)]
each non-adjacent pair of vertices with degree $3$ has exactly two common neighbours,
\item[(iii)]
$G$ does not have one of the graphs $P_7$, $C_6$, $\hat{E}_6$, $D_7$, and $H_2$ as subgraph.
\end{enumerate}
If there exists a vertex $v\in V(G)$ with degree at least $4$, then $\gamma_{\{R2\}}(G)\leqslant n-3$.
Also, if there exists a pair of nonadjacent vertices with degree $3$ having zero, one or three common neighbours, then we obtain $\gamma_{\{R2\}}(G)\leqslant n-3$. Moreover, Roman $\{2\}$-domination number of each of graphs $P_7$, $C_6$, $\hat{E}_6$, $D_7$, and $H_2$ is $n-3$. Thus, they cannot be as a subgraph of $G$. It is not hard to see that all graphs which have the above three properties are listed in Fig. \ref{1000}.
Conversely, it is easy to verify that for all graphs $G$ listed in Fig. \ref{1000}, we have $\gamma_{\{R2\}}(G)= n-2$.
\end{proof}

\section{Graph products}
In this section we study Roman $\{2\}$-domination on some graph products.\\
Also, in the following theorems we classify Roman $\{2\}$-domination for join of two graphs.
\begin{thm}\label{join2}
Let $G$ and $H$ be two graphs. Then $\gamma_{\{R2\}}(G\vee H)\leqslant 4$. Moreover, if $k=\gamma_{\{R2\}}(G)\leqslant \gamma_{\{R2\}}(H)$, then we have
\begin{enumerate}
\item[(a)]
$k\leqslant 2$ if and only if $\gamma_{\{R2\}}(G\vee H)=2$,

\item[(b)]
$k=3$ or $k=4$ and $\gamma(G)=2$ if and only if $\gamma_{\{R2\}}(G\vee H)=3$.

\end{enumerate}
\end{thm}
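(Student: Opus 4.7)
The plan is to break the argument into the global bound $\gamma_{\{R2\}}(G\vee H)\le 4$, the two directions of~(a), and the two directions of~(b). The global bound is immediate: assigning weight~$1$ to two vertices on each side gives a Roman $\{2\}$-function, because every vertex of $V(G)$ sees all of $V(H)$ in the join and vice versa, so each zero vertex has total neighborhood weight at least~$2$.

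For~(a), the forward direction starts from a Roman $\{2\}$-function of weight $\le 2$ on $G$ and extends it by $0$'s on $V(H)$: every vertex of $V(H)$ then has $f(N(v))\ge f(V(G))\ge 2$, so the extension is Roman $\{2\}$-dominating on $G\vee H$, and the trivial lower bound gives equality. For the converse, I take a weight-$2$ function $f=(V_0,V_1,V_2)$ on $G\vee H$ and split on whether $|V_2|=1$ or $|V_1|=2$, and then on the sides of the supported vertices. Whenever the whole support lies on one side, restriction yields a weight-$2$ Roman $\{2\}$-function of that side, forcing $k\le 2$ directly or through $k\le\gamma_{\{R2\}}(H)\le 2$. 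In the mixed configuration a neighborhood count, exactly as in the proof of Proposition~\ref{2-roman2}, forces the single supported vertex on the $G$-side to be universal in $G$, and Proposition~\ref{2-roman2} again gives $k\le 2$.

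Part~(b) forward splits analogously. If $k=3$, extend an optimal function of $G$ by zeros on $V(H)$ to obtain a weight-$3$ function on $G\vee H$. If $k=4$ and $\gamma(G)=2$, pick a dominating set $\{g_1,g_2\}$ of $G$ and any $h\in V(H)$ and give each weight~$1$: a zero vertex of $V(G)$ then sees at least one of $g_1,g_2$ inside $G$ plus $h$ through the join (total $\ge 2$), and a zero vertex of $V(H)$ sees both $g_1$ and $g_2$ through the join. The matching lower bound $\gamma_{\{R2\}}(G\vee H)\ge 3$ is just the contrapositive of~(a).

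The converse of~(b) is the crux. I take a weight-$3$ optimum $f=(V_0,V_1,V_2)$ on $G\vee H$ and case-split on $(|V_2|,|V_1|)\in\{(1,1),(0,3)\}$ and on how the supported vertices are distributed between $V(G)$ and $V(H)$. If at least two supported vertices share a side, restricting $f$ gives a Roman $\{2\}$-function of weight $\le 3$ on that side, yielding $k\le 3$ directly, or $\gamma_{\{R2\}}(H)\le 3$ and hence $k\le 3$ via the ordering hypothesis. When the support straddles the join with only a single supported vertex on one side, a neighborhood count either forces that single vertex to be universal (so $k\le 2$) or forces the two $G$-side vertices to dominate $V(G)$, giving $\gamma(G)\le 2$; combined with $k\ge 3$ (which rules out $\gamma(G)=1$ by Proposition~\ref{2-roman2}) and the standard bound $\gamma_{\{R2\}}(G)\le 2\gamma(G)$, this forces $\gamma(G)=2$ and $k=4$. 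The main obstacle is precisely the configuration in which only one weighted vertex of $f$ lies in $V(G)$: there the ordering hypothesis $k\le\gamma_{\{R2\}}(H)$ must be exploited to transfer the domination information from the $H$-side back to $G$, and making this bookkeeping clean is the delicate step of the proof.
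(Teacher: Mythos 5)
Your overall architecture---upper-bound constructions for the forward directions, and a case split on $(|V_2|,|V_1|)$ and on how the support of an optimal function distributes between $V(G)$ and $V(H)$ for the converses---is essentially the route the paper takes, and everything through part (a) and the forward direction of (b) is sound (modulo the small point that when $k=1$ you must extend a weight-$2$ function on $G=K_1$ by zeros, not the weight-$1$ optimum, or the $H$-vertices see only total weight $1$). Two problems remain in the converse of (b). First, the claim that ``if at least two supported vertices share a side, restricting $f$ gives a Roman $\{2\}$-function of weight $\le 3$ on that side'' fails when the third supported vertex sits on the other side: a $0$-vertex of $V(G)$ already collects weight $1$ across the join, so it only needs weight $1$ from within $G$, and the restriction certifies merely that the two $G$-side vertices \emph{dominate} $G$, not that they Roman $\{2\}$-dominate it. You in effect re-handle this $2$--$1$ split correctly in the following sentence, but as written your two cases overlap and give conflicting conclusions; only the ``all supported vertices on one side'' configuration admits the restriction argument.

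Second, and more seriously, the case you defer as ``the delicate step''---one weight-$1$ vertex in $V(G)$ and two weight-$1$ vertices in $V(H)$---cannot be closed, because the only information available there is $\gamma(H)\le 2$, which says nothing about $\gamma(G)$: every $0$-vertex of $V(G)$ already sees both $H$-side vertices through the join, so $G$ is completely unconstrained. Indeed part (b) is false in exactly this configuration. Take $G=P_7$ and $H=P_6$, so $k=\gamma_{\{R2\}}(P_7)=4=\gamma_{\{R2\}}(P_6)$ and $\gamma(G)=3$; placing weight $1$ on a dominating pair of $P_6$ and on one vertex of $P_7$ gives a Roman $\{2\}$-dominating function of $P_7\vee P_6$ of weight $3$, and the lower bound from (a) then yields $\gamma_{\{R2\}}(P_7\vee P_6)=3$, yet neither $k=3$ nor $\gamma(G)=2$ holds. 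The paper's own proof buries this same case under a ``without loss of generality'' that is not legitimate, since the statement is not symmetric in $G$ and $H$ (the condition is on $\gamma(G)$ specifically). So your instinct about where the difficulty lies is exactly right, but no bookkeeping will transfer the domination information from $H$ back to $G$; the statement itself must be repaired, e.g.\ to ``$k=3$, or $k=4$ and $\min\{\gamma(G),\gamma(H)\}=2$,'' after which your case analysis closes cleanly.
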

\begin{proof}
The first assertion is obvious because for each graph $G$, ${\gamma}_{\{R2\}}(G)\leqslant 2\gamma(G)$.
For $(a)$, assume that $k=1$, then $G=K_{1}$. It is sufficient to use Proposition \ref{2-roman2}.
Now, suppose $k=2$. By Proposition \ref{2-roman2}, $G\vee H=\overline{K_t}\vee F$ for $t=1~\text{or}~2$, and for some graph $F$.
Conversely, let ${\gamma}_{\{R2\}}(G\vee H)=2$. By Proposition \ref{2-roman2}, there exists a graph $L$ such that $G \vee H=\overline{K_t}\vee L$ for $t=1~\text{or}~2$. Without loss of generality, assume that $G$ and $H$ are not clique graphs. One can check that the vertex set of $\overline{K_t}$ for $t=1, 2$ is a subset of $V(G)$ or $V(H)$. If $t=1$ and $V(\overline{K_1})\subseteq V(H)$, then $\gamma_{\{R2\}}(H)\leqslant 2$. Also, if $t=2$ and $V(\overline{K_2})\subseteq V(H)$, then $\gamma_{\{R2\}}(H)\leqslant 2$. Similarly, one can check two other cases.
Hence, $\gamma_{\{R2\}}(G)\leqslant 2$.

For $(b)$, if $k=3$, then $\gamma_{ \lbrace R2 \rbrace}(G \vee H) \leqslant 3$. By $(a)$, $\gamma_{ \lbrace R2 \rbrace}(G \vee H) \geqslant 3$.
Now, let $k=4$ and $\{u, v\}\subseteq V(G)$ be a minimum dominating set for $G$, and $w$ be an arbitrary vertex in $V(H)$. It is seen that $\{u, v, w\}$ is a $2$-dominating set for $G\vee H$. By Proposition \ref{2-roman3} we have ${\gamma}_{\{R2\}}(G\vee H)=3$.
Conversely, let $\gamma_{\lbrace R2 \rbrace}(G \vee H)=3$. By $(a)$, $k\geqslant 3$. 
We consider two cases using Proposition \ref{2-roman3} to complete the proof.  
First assume that $\gamma_{2}(G \vee H)=3$. Let $\{u, v, w\}\subseteq V(G \vee H)$ be a $2$-dominating set on $G\vee H$. Without loss of generality, we consider two subcases,
\begin{enumerate}
\item[(i)]
If $\{u, v, w\}\subseteq V(G)$, then by $(a)$, $\gamma_{\{R2\}}(G)=3$.
\item[(ii)]
If $\{u, v\}\subseteq V(G)$ and $w\in V(H)$, then $\gamma(G)=2$. So by $(a)$, $3\leqslant \gamma_{\{R2\}}(G)\leqslant 4$.
\end{enumerate}

Let $\vert V(G)\vert =n$ and $\vert V(H)\vert =m$. Suppose that $\Delta(G\vee H)=n+m-2$ and $N_{n+m-2}(G\vee H)$ is clique. Let $\vert N_{n+m-2}(G\vee H)\vert = 1$, $v\in N_{n+m-2}(G\vee H)$ and $u\notin N(v)$. Since $\gamma_{\lbrace R2 \rbrace}(G \vee H)=3$, then $\{u, v\}$ is a subset of $V(G)$ or $V(H)$. Without loss of generality, let $\{u, v\}$ be a subset of $V(H)$. Hence, $\gamma_{\lbrace R2 \rbrace}(H)=3$, and by Proposition \ref{2-roman2}, $\gamma_{\lbrace R2 \rbrace}(G)=3$. Now, assume that $\vert N_{n+m-2}(G\vee H)\vert \geqslant 2$. Consider two vertices $u, v\in N_{n+m-2}(G\vee H)$ and a vertex $w\notin N_{n+m-2}(G\vee H)$. By Proposition \ref{2-roman2} and $\gamma_{\lbrace R2 \rbrace}(G\vee H)=3$, we can check that $\{u, v\}\nsubseteq V(G)$ and $\{u, v\}\nsubseteq V(H)$. Finally, we can conclude that $k=3$.
\end{proof}
In the following theorem we obtain Roman $\{2\}$-domination number for the Corona
product of two graphs.
 
\begin{thm}\label{corono}
Let $G$ and $H$ be two graphs such that the order of $G$ is $n$. If $H=K_1$, then $\gamma_{\{R2\}}(G[H])=n+\gamma(G)$, otherwise $\gamma_{\{R2\}}(G[H])=2n$.
\end{thm}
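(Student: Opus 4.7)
The strategy is to partition $V(G[H])$ into the $n$ blocks $S_j = \{j\}\cup V(H_j)$ for $j=1,\ldots,n$, so that for every function $f$ on $V(G[H])$ one has $w(f)=\sum_{j=1}^{n} f(S_j)$. I will treat the two cases of the statement separately.

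Assume first that $H\ne K_1$, i.e.\ $|V(H)|\ge 2$. For the upper bound $2n$, I simply put $f(j)=2$ for every $j\in V(G)$ and $f\equiv 0$ on each copy $H_j$; this is a valid Roman $\{2\}$-dominating function because every vertex of $H_j$ is adjacent to $j$ in $G[H]$. For the lower bound I would show that $f(S_j)\ge 2$ for every $j$ and every Roman $\{2\}$-dominating function $f$. Either some $h\in V(H_j)$ has $f(h)=0$, in which case $N(h)\subseteq S_j\setminus\{h\}$ forces $f(S_j)\ge f(N(h))\ge 2$; or every vertex of $V(H_j)$ carries value at least $1$, and since $|V(H_j)|\ge 2$ we again obtain $f(S_j)\ge 2$. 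Summing over $j$ gives $w(f)\ge 2n$.

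Now assume $H=K_1$; write $h_j$ for the pendant attached to vertex $j$. The upper bound $n+\gamma(G)$ is witnessed by fixing a minimum dominating set $D$ of $G$ and setting $f(j)=2$ for $j\in D$ and $f(h_j)=1$ for $j\notin D$; the total weight is $2|D|+(n-|D|)=n+\gamma(G)$, and the Roman $\{2\}$-condition is verified directly at each vertex. For the lower bound, first note that for every $j$ we have $f(S_j)=f(j)+f(h_j)\ge 1$, because $f(h_j)=0$ would force $f(j)=2$ (the only neighbour of $h_j$ is $j$). Partition $\{1,\ldots,n\}=A\sqcup B$ with $A=\{j:f(S_j)\ge 2\}$ and $B=\{j:f(S_j)=1\}$; then $w(f)\ge 2|A|+|B|=n+|A|$, so the lower bound reduces to proving $|A|\ge \gamma(G)$.

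The heart of the argument is showing that $A$ is a dominating set of $G$. If $j\in B$ then necessarily $f(h_j)=1$ and $f(j)=0$, so applying the Roman $\{2\}$-condition at $j$ in $G[H]$ gives $f(h_j)+\sum_{k\in N_G(j)}f(k)\ge 2$, and hence some $k\in N_G(j)$ satisfies $f(k)\ge 1$. Such a $k$ cannot lie in $B$ (every vertex of $B$ has $f$-value $0$), so $k\in A$, proving that $A$ dominates $G$. The main subtlety is precisely this choice of $A$ via the block sums $f(S_j)$: the more obvious candidate $\{j:f(j)\ge 1\}$ can fail to be dominating when a pendant is assigned $2$ and its corresponding $G$-vertex has no positive-valued $G$-neighbour, so routing through the partition $A\sqcup B$ is essential.
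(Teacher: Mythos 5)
Your proof is correct, and in the case $H\neq K_1$ it is exactly the paper's argument: both decompose $G[H]$ into the blocks $\{v_j\}\cup V(H_j)$ and show that every Roman $\{2\}$-dominating function places weight at least $2$ on each block, the upper bound coming from assigning $2$ to every vertex of $G$. In the case $H=K_1$ your lower bound is organized differently. The paper first normalizes an optimal function so that no leaf receives the value $2$ (stated as a ``without loss of generality''), observes that $(V_1\cap V(G))\cup V_2$ is then a dominating set of $G$, and finishes with the count $\ell+|V_1\cap V(G)|+2|V_2|\geqslant \ell+\gamma(G)+|V_2|\geqslant n+\gamma(G)$. You instead work directly with the block sums $f(S_j)=f(j)+f(h_j)$, note each is at least $1$, split the indices into $A=\{j: f(S_j)\geqslant 2\}$ and $B=\{j: f(S_j)=1\}$, and show that $A$ dominates $G$, giving $w(f)\geqslant n+|A|\geqslant n+\gamma(G)$. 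The two countings produce the same bound, but your version has the modest advantage of avoiding the normalization step, which the paper does not justify (one must check that a $2$ on a leaf can be transferred to its support vertex without destroying the Roman $\{2\}$-domination property). Your closing observation --- that the naive candidate $\{j: f(j)\geqslant 1\}$ can fail to dominate when a leaf carries the value $2$ --- is precisely the pitfall that the paper's ``without loss of generality'' is meant to sidestep, and routing the argument through the block sums handles it cleanly.
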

\begin{proof}
Let $H=K_1$. Easily we can show that for every graph $G$, $\gamma_{\{R2\}}(G[K_1] )\leqslant n+\gamma(G)$. On the other hand, assume that $f=(V_0, V_1, V_2)$ is a $\gamma_{\{R2\}}(G[K_1])$-function. Without loss of generality, suppose that $nK_1\subseteq V_0 \cup V_1$. Also, let $\ell K_1\in V_1$ and $(n-\ell)K_1\in V_0$. Thus, $V_2 \subseteq V(G)$. Moreover, $(V_1 \cap V(G))\cup V_2$ forms a dominating set for $G$.
\begin{align*}
w(f)&=\ell+\vert V_1 \cap V(G)\vert+2\vert V_2\vert\\
&\geqslant \ell + \gamma(G) + \vert V_2\vert \\
&\geqslant n+\gamma(G).
\end{align*}
For the second assertion, let $V(G)=\{v_1, v_2, \ldots, v_n\}$ and $f$ be a $\gamma_{\{R2\}}(G[H])$-function. Then, 
$$w(f)=w(f\vert_{H_1})+w(f\vert_{H_2})+\ldots +w(f\vert_{H_n})\geqslant 2n,$$
where $H_i=v_i\vee H$ for $i=1, 2, \ldots, n$. So, $\gamma_{\{R2\}}(G[H])=2n$.
\end{proof}

Moreover, we state a bound and some results about Cartesian product of graphs.
Let $G$ and $H$ be two graphs with $V(G)=\{v_1, v_2, \ldots, v_n\}$ and $V(H)=\{u_1, u_2, \ldots, u_m\}$. In $G\square H$, we define $G^i$ and $H^j$  for $i=1, \ldots, m$ and $j=1, \ldots, n$, as $i$th layer and $j$th layer of $G$ and $H$, respectively as follows,
$$G^i=\{(v, u_i): v\in V(G)\}, ~~H^j=\{(v_j, u): u\in V(H)\}.$$
\begin{thm}
$\gamma_{\{R2\}}(G\square H)\leqslant \min \left\{ \gamma_{\{R2\}}(G)|V(H)|, \gamma_{\{R2\}}(H) |V(G)| \right\}$. 
Also, this bound is sharp.
\end{thm}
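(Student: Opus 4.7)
The plan is to lift an optimal Roman $\{2\}$-dominating function of $G$ (respectively $H$) to $G\square H$ by making it constant along the $H$-fibers (respectively $G$-fibers), and then verify that this lift is again a Roman $\{2\}$-dominating function. By the symmetry $G\square H \cong H\square G$, it is enough to produce a function of weight $\gamma_{\{R2\}}(G)\,|V(H)|$; the other bound follows by interchanging the roles of $G$ and $H$.

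Concretely, let $f : V(G)\to\{0,1,2\}$ be a $\gamma_{\{R2\}}(G)$-function and define $f' : V(G\square H)\to\{0,1,2\}$ by $f'(v,u)=f(v)$ for every $(v,u)\in V(G)\times V(H)$. Clearly
\[
wt(f')=\sum_{u\in V(H)}\sum_{v\in V(G)} f(v)=\gamma_{\{R2\}}(G)\,|V(H)|.
\]
To check the Roman $\{2\}$-property, pick any $(v,u)$ with $f'(v,u)=0$, i.e.\ $f(v)=0$. Since $f$ is Roman $\{2\}$-dominating on $G$, either there is a neighbour $w$ of $v$ in $G$ with $f(w)=2$, in which case $(w,u)$ is a neighbour of $(v,u)$ in the layer $G^{\,u}$ with $f'(w,u)=2$, or there are two neighbours $x,y$ of $v$ in $G$ with $f(x)=f(y)=1$, in which case $(x,u)$ and $(y,u)$ are two neighbours of $(v,u)$ in $G^{\,u}$ each receiving value $1$ under $f'$. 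Thus $f'$ is a Roman $\{2\}$-dominating function on $G\square H$, so $\gamma_{\{R2\}}(G\square H)\le \gamma_{\{R2\}}(G)\,|V(H)|$, and the analogous construction yields the other term in the minimum.

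For sharpness, the simplest witness is $H=K_1$: then $G\square H\cong G$, both sides equal $\gamma_{\{R2\}}(G)$, and equality is attained. More interestingly, one can observe that for $G=K_2$ and $H=K_2$ the bound would give $4$ while $K_2\square K_2\cong C_4$ has $\gamma_{\{R2\}}(C_4)=2$, so in general the bound is not tight; however taking any graph $G$ together with $H=K_1$ shows that no improvement to the inequality is possible in full generality, which is the intended sense of sharpness here.

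There is no real obstacle in this argument: the only point to be careful about is noting that in $G\square H$ the neighbours of $(v,u)$ that lie in the $G$-layer $G^{\,u}$ are exactly the pairs $(w,u)$ with $wv\in E(G)$, so the verification of the Roman $\{2\}$-condition reduces cleanly to the corresponding condition for $f$ on $G$. The slight subtlety is only in phrasing sharpness honestly, since the minimum is usually strictly larger than $\gamma_{\{R2\}}(G\square H)$ on nontrivial products.
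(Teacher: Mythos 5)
Your argument for the inequality is exactly the paper's: lift an optimal Roman $\{2\}$-dominating function of one factor to $G\square H$ by repeating it on every copy of that factor, and note that the Roman $\{2\}$-condition is verified inside each layer; your write-up just spells out the verification more explicitly. The only divergence is the sharpness witness: you use the degenerate $H=K_1$, which does attain equality but trivially, whereas the paper exhibits the nontrivial example $K_{1,n}\square P_2$ (for $n\geqslant 3$), where both factors have at least two vertices and the minimum $\min\{2\cdot 2,\,2(n+1)\}=4$ is actually achieved; if you want sharpness in a meaningful sense, you should give such a nondegenerate example.
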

\begin{proof}
Let $f$ be a $\gamma_{\{R2\}}$-function for $H$. Consider each copy of $H$ with $\gamma_{\{R2\}}$-function $f$ in cartesian product $G\square H$. Since we have $\vert V(G)\vert$ copies of $H$, it is easy to see that $\gamma_{\{R2\}}(G\square H)\leqslant \gamma_{\{R2\}}(H) |V(G)|$. By a similar way, we have $\gamma_{\{R2\}}(G\square H)\leqslant \gamma_{\{R2\}}(G)|V(H)|$.
In order to prove this bound is sharp, consider $\gamma_{\{R2\}}(K_{1,n} \square P_{2})=2\gamma_{\{R2\}}(K_{1,n})=4$, for $n\geqslant 3$, see Proposition \ref{dom4}.
\end{proof}
\begin{thm}
Let $m$ and $n$ be two positive integers with $n
\leqslant m$. Then $$\gamma_{\{R2\}}(K_{n}\square K_{m})= \min \{ m, 2n \}.$$
\end{thm}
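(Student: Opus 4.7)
My plan is to establish matching upper and lower bounds. For the upper bound $\gamma_{\{R2\}}(K_n\square K_m)\le\min\{m,2n\}$, I will exhibit two explicit Roman $\{2\}$-dominating functions. First, assigning the value $2$ to every vertex of a single fixed column yields a function of weight $2n$; every other vertex lies in the same row as one of these $2$'s, so it is dominated. Second, using the hypothesis $m\ge n$, I place a single $1$ in each of the $m$ columns so that every row receives at least one $1$, for instance by setting $f(i,j)=1$ exactly when $i\equiv j\pmod n$ and $f(i,j)=0$ otherwise. This has weight $m$. A zero vertex $(i,j)$ then sees the unique $1$ in column $j$ (in a different row, since $(i,j)$ itself is $0$) and at least one $1$ in row $i$ (some column $j'\ne j$ satisfies $j'\equiv i\pmod n$ because $m\ge n$), so it has two $1$-valued neighbors.

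For the lower bound, let $f=(V_0,V_1,V_2)$ be any Roman $\{2\}$-dominating function of weight $w$, and let $w_j$ denote the sum of $f$-values on column $j$, so that $\sum_{j=1}^m w_j=w$. Suppose $w<m$. Then some column $j_0$ satisfies $w_{j_0}=0$, meaning $f$ vanishes on column $j_0$. Fix any row $i$ and consider the vertex $(i,j_0)\in V_0$. Its neighbors in $K_n\square K_m$ are the remaining vertices of row $i$ together with the remaining vertices of column $j_0$, and the column contributes nothing. The Roman $\{2\}$-condition thus forces row $i$ to contain either a vertex of value $2$ or at least two vertices of value $1$; in either case the $f$-weight of row $i$ is at least $2$. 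Summing across the $n$ rows gives $w\ge 2n$. Hence in every case $w\ge\min\{m,2n\}$, matching the upper bound.

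The only delicate point is the weight-$m$ construction: placing all the $1$'s in a single row or a single column fails (zero vertices then see at most one $1$-neighbor), and a diagonal placement requires $n=m$. The cyclic (surjective) pattern from columns to rows is the natural remedy, and it is precisely the hypothesis $m\ge n$ that makes the surjection available. The rest is clean counting of weights column by column and row by row, so I do not anticipate any further obstacle.
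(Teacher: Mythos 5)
Your proof is correct and follows essentially the same route as the paper: a counting/pigeonhole argument on the $K_n$-layers and $K_m$-layers for the lower bound (a weight-zero column forces every row to carry weight at least $2$, which is just a repackaging of the paper's ``zero column meets weight-$\leqslant 1$ row'' contradiction), and an explicit one-$1$-per-column, all-rows-covered construction of weight $m$ for the upper bound. If anything you are more complete, since you also exhibit the weight-$2n$ function (a single column of $2$'s) needed when $m>2n$, a case the paper's lone construction of weight $m$ does not actually cover.
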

\begin{proof}
Let $V(K_n)=\{v_1, v_2, \ldots, v_n\}$ and $V(K_m)=\{u_1, u_2, \ldots, u_m\}$. 
Suppose that $\gamma_{\{R2\}}(K_{n}\square K_{m}) < \min \{ m, 2n \}$, and let $f=(V_0, V_1, V_2)$ be a $\gamma_{\{R2\}}(K_{n}\square K_{m})$-function. Thus, we can say that there exists the layer $K_{n}^{i}$ for some $1\leqslant i\leqslant m$, such that $w(f\vert_{K_{n}^{i}})=0$. On the other hand, we can find a layer $K_{m}^{j}$ for some $1\leqslant j\leqslant n$, with $w(f\vert_{K_{m}^{j}})\leqslant 1$. It is easy to see that $(v_i, u_j)\in V_0$ and $f(N(v_i, u_j))\leqslant 1$. Therefore, we achieve a contradict.
Now to get the equality, consider a Roman $\{2\}$-dominating function on $K_{n}\square K_{m}$ that assigns to $(v_i,u_i)$ and $(v_1,u_j)$ a $1$ for every $i$ and for every $j$ belonging to $\{n+1, ... , m\}$, and a $0$ to the remaining vertices of the graph.
\end{proof}

\vspace{0.5cm}
We know that $\gamma_{\{R2\}}(G_{m, n})\leqslant \gamma_{2}(G_{m, n})$ for all positive integers $m$ and $n$. Moreover, this bound is sharp for $G_{2, n}$ for each $n$ and $G_{3, n}$ for $n\leqslant 13$ as well as $G_{4, 4}$.
We recall the following results of \cite{new}.
\begin{thm}\label{thm:2}
Let $n$ be a positive integer. Then the following equalities hold:
\begin{itemize}

\item[(i)]
$\gamma_{2}(G_{2, n})=n$,

\item[(ii)]
$\gamma_{2}(G_{3, n})= \lceil \frac{4n}{3}\rceil$,

\item[(iii)]
$\gamma_{2}(G_{4, n})= \lceil \frac{7n+3}{4}\rceil$, for $n\geqslant 3$.
\end{itemize}
\end{thm}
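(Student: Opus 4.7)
The plan is to prove each of the three equalities by exhibiting a matching upper and lower bound. For the upper bounds I would give explicit periodic $2$-dominating sets in each grid. For $G_{2,n}$, the ``staircase'' set $S=\{(1,j):j \text{ odd}\}\cup\{(2,j):j \text{ even}\}$ has size $n$, and one checks directly that every omitted vertex has both of its potential row-neighbour and its column-neighbour in $S$. For $G_{3,n}$, a pattern of period three placing four suitably chosen vertices in every block of three columns yields $\lceil 4n/3\rceil$; for $G_{4,n}$, a pattern of period four using seven vertices per block (with a small boundary correction accounting for the $+3$ in $7n+3$) yields $\lceil(7n+3)/4\rceil$. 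In each case the verification that the constructed set is $2$-dominating is routine.

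For the lower bound in~(i), I would argue column-by-column. Let $S$ be a minimum $2$-dominating set, and put $a_j=|S\cap\text{col }j|\in\{0,1,2\}$. The crucial observation is that if $a_j=0$, then each of the two vertices in column $j$ has its only column-neighbour outside $S$, so the two horizontal neighbours in columns $j-1$ and $j+1$ must both lie in $S$; hence $a_{j-1}=a_{j+1}=2$. A similar local check handles $a_j=1$: the unselected vertex in column $j$ needs a second neighbour in an adjacent column. These two facts together imply that over any maximal run of zero columns flanked by full columns, the average value of $a_j$ is at least $1$, so $\sum_j a_j\ge n$.

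For~(ii) and~(iii) I would upgrade this to a transfer-matrix / dynamic-programming argument: encode the state of two (resp.\ three) consecutive columns by their intersection pattern with $S$, record which vertices still need an additional neighbour to the right, and determine the minimum weight of any legal sequence of column states of length $n$. The values $\lceil 4n/3\rceil$ and $\lceil(7n+3)/4\rceil$ then emerge as the minimum mean cycle in the associated weighted digraph, with the constant $+3$ in~(iii) coming from the unavoidable ``startup cost'' on the left-hand boundary. Equivalently, one can formulate a discharging scheme in which each column begins with charge $a_j$ and redistributes so that every column ends with at least $4/3$ (resp.\ $7/4$).

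The main obstacle is part~(iii). The period-four optimum for $G_{4,n}$ is not the most symmetric choice, and the lower bound requires ruling out many near-extremal configurations; in particular, one has to show that no four consecutive columns can total fewer than seven while still being extendable on both sides. I expect the bulk of the work to be the finite but tedious case analysis underpinning the transfer-matrix (or discharging) step, together with the boundary bookkeeping that produces the additive constant $3$.
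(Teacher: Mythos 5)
First, a point of reference: the paper does not prove this theorem at all --- it is quoted verbatim from Lu and Xu \cite{new} (``We recall the following results of \cite{new}''), so there is no in-paper argument to measure your proposal against; it has to stand on its own.

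Your treatment of (i) is essentially complete and correct: the staircase construction gives the upper bound, and the column-counting lower bound works once you observe that a zero column must be interior, cannot be adjacent to another zero column, and forces both flanking columns to contain both of their vertices, so the deficit of each zero column is absorbed by the surplus of its two neighbours. (Strictly speaking the claim fails for $n=1$, where $\gamma_2(P_2)=2$, but that is an edge case already implicit in the source.) For (ii) and (iii), however, what you have written is a plan rather than a proof. The transfer-matrix / minimum-mean-cycle framework is a legitimate and standard way to attack such problems, but the entire mathematical content of the theorem lives in the step you defer: enumerating the column states, verifying that no legal cycle in the state digraph has mean weight below $4/3$ (resp.\ $7/4$), and carrying out the boundary bookkeeping that produces the additive constant $3$ in $\lceil(7n+3)/4\rceil$ rather than some other constant. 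You explicitly acknowledge that this ``finite but tedious case analysis'' is the bulk of the work and do not supply it, nor do you exhibit the period-three and period-four constructions concretely enough to check that they are $2$-dominating and achieve the stated values. As it stands, parts (ii) and (iii) assert that a known technique would yield the known answer, which is plausible but not a proof; to close the gap you would need either to execute the case analysis (or an equivalent discharging argument) or, as the paper does, to cite \cite{new} for the result.
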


\begin{prop}
$\gamma_{\{R2\}}(G_{2, n})=n.$
\end{prop}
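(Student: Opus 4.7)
The upper bound $\gamma_{\{R2\}}(G_{2,n})\le n$ is immediate: the earlier proposition yields $\gamma_{\{R2\}}(G_{2,n})\le\gamma_2(G_{2,n})$, and the cited theorem gives $\gamma_2(G_{2,n})=n$. So the plan is to prove the matching lower bound $\gamma_{\{R2\}}(G_{2,n})\ge n$ by a discharging / partition argument on the column weights.

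Let $f$ be any Roman $\{2\}$-dominating function on $G_{2,n}$, and write $a_j=f((1,j))$, $b_j=f((2,j))$, $w_j=a_j+b_j$, and $W=\sum_{j=1}^{n}w_j$. The first step is to record the structural consequences of the R2-condition applied at the zero-valued vertices of each column: \textbf{(a)} if $w_j=0$ for some $2\le j\le n-1$, then componentwise $a_{j-1}+a_{j+1}\ge 2$ and $b_{j-1}+b_{j+1}\ge 2$, so $w_{j-1}+w_{j+1}\ge 4$; \textbf{(b)} $w_1=0$ forces $w_2\ge 4$ and $w_n=0$ forces $w_{n-1}\ge 4$, so a 2-run of zero columns touching either boundary is impossible; \textbf{(c)} no three consecutive columns can all be zero (the middle vertex would have an entirely zero neighbourhood); and \textbf{(d)} the \emph{sandwich} principle, namely if $w_{j-1}=w_{j+1}=0$ and $w_j>0$ for an interior $j$, then $w_j\ge 2$, since otherwise the unique zero vertex of column $j$ has $f(N)\le 1$.

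Next I would partition $\{1,\ldots,n\}$ into disjoint blocks by attaching every zero column to a uniquely chosen non-zero neighbour. Zeros belonging to a 2-run $\{j,j+1\}$ (which by (a) forces $w_{j-1}=w_{j+2}=4$) are attached to the outside non-zero neighbour on their own side; an isolated interior zero $j$ is attached to whichever of $j-1,j+1$ has the larger column weight, with ties broken to the left; and a boundary isolated zero attaches to its unique non-zero neighbour. Each non-zero column $k$ then carries a block $B_k=\{k\}\cup\{\text{zeros attached to }k\}$ with $|B_k|\in\{1,2,3\}$, and the blocks partition $\{1,\ldots,n\}$. The central claim is $w_k\ge|B_k|$ for every block, which on summing immediately yields $W=\sum_k w_k\ge\sum_k|B_k|=n$.

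Blocks of size $1$ are trivial, and blocks of size $2$ follow directly from (a), (b), or the 2-run forcing, each of which supplies $w_k\ge 2$. The hard part will be the size-$3$ case, which arises only when both isolated-zero neighbours of $k$ attach to $k$: the tie-breaking rule then forces the \emph{strict} inequality $w_{k-2}<w_k$ together with $w_{k+2}\le w_k$, and combining $w_{k-2}\le w_k-1$ with $w_{k-2}+w_k\ge 4$ from (a) applied at column $k-1$ gives $(w_k-1)+w_k\ge 4$, so $w_k\ge 3$ by integrality. This is the one delicate step: without the strictness coming from the tie-breaking rule, the sandwich fact (d) alone only yields $w_k\ge 2$, which would be insufficient for a size-$3$ block. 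Once this case is handled cleanly, summing $w_k\ge|B_k|$ over the blocks completes the lower bound, and combined with the upper bound we obtain $\gamma_{\{R2\}}(G_{2,n})=n$.
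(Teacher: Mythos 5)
Your proof is correct, and it is a genuinely more complete argument than the one the paper gives. The paper's lower-bound proof is a two-sentence sketch: it asserts that each column of an optimal function has weight at least $1$, justified only by the observation that a zero column forces its two neighbouring columns to carry total weight $4$, followed by the claim that such a function is ``not optimal.'' As written this does not close the argument, because a single weight-$4$ column flanked by two zero columns contributes $4$ over $3$ columns --- the ``zero columns are wasteful'' heuristic needs exactly the global accounting you supply. Your discharging argument provides it: facts (a)--(d) all check out (for an interior zero column $j$ one indeed gets $a_{j-1}+a_{j+1}\ge 2$ and $b_{j-1}+b_{j+1}\ge 2$ separately, and a zero boundary column forces its neighbour to weight exactly $4$); by (b) and (c) every zero column has a non-zero neighbour, so the attachment rule is well defined and the blocks partition the columns; and the delicate size-$3$ block is correctly split into the case where a flanking zero belongs to a $2$-run or the boundary (forcing $w_k=4$) and the doubly-isolated case, where the strict inequality $w_{k-2}<w_k$ from the leftward tie-break combined with $w_{k-2}+w_k\ge 4$ gives $2w_k\ge 5$, hence $w_k\ge 3$. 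Summing $w_k\ge|B_k|$ then yields $W\ge n$, and with $\gamma_{\{R2\}}(G_{2,n})\le\gamma_2(G_{2,n})=n$ the result follows. (Both your proof and the statement implicitly assume $n\ge 2$: for $n=1$ the graph is $P_2$, whose Roman $\{2\}$-domination number is $2$, and the cited value $\gamma_2(G_{2,1})=1$ fails as well.) In short, you take the same basic route --- column-weight counting with zero columns charged to heavy neighbours --- but you actually carry it out, filling a real gap in the paper's own proof.
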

\begin{proof}
We claim that the weight of each layer of $P_{2}$ is at least $1$. Assume that there exists a layer with weight $0$. To have a Roman $\{2\}$-dominating set for $G_{2, n}$, the weight of the adjacent layers 
will be $4$. The obtained Roman $\{2\}$-domination number is not optimal because its weight is larger than $\gamma_2(G_{2, n})$.
\end{proof}

\begin{prop}
\qquad \qquad
\begin{enumerate}
\item[(a)]
For $n=2, 3, 6$,
$\gamma_{\{R2\}}(G_{3, n})\leqslant \lfloor \frac{5n+3}{4}\rfloor$.
Otherwise,
$\gamma_{\{R2\}}(G_{3, n})\leqslant \lceil \frac{5n+3}{4}\rceil$.

\item[(b)]
For $n=2, 3, 5, 6, 9$, $\gamma_{\{R2\}}(G_{4, n})\leqslant \lfloor \frac{5n+4}{3}\rfloor$. 
Otherwise, $\gamma_{\{R2\}}(G_{4, n})\leqslant \lceil \frac{5n+4}{3}\rceil$. 

\end{enumerate}
\end{prop}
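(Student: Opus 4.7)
The plan is to establish both upper bounds constructively by exhibiting an explicit Roman $\{2\}$-dominating function on $G_{3,n}$ (respectively $G_{4,n}$) whose weight matches the claimed value, and then to dispatch the listed exceptional small values of $n$ by direct \emph{ad hoc} assignments.

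For part (a), label the vertices of $G_{3,n}$ by $(i,j)$ with $1\leqslant i\leqslant 3$ and $1\leqslant j\leqslant n$. I would identify a repeating block $B$ of width $4$ whose weight is $5$ and which Roman $\{2\}$-dominates every vertex inside it together with its interface to the neighbouring blocks. A natural candidate places a $2$ on the middle row of one column of $B$ together with two $1$'s on the top and bottom rows of a later column, so that the middle $2$ covers its entire column and the two flanking columns of zeros, while the pair of $1$'s on opposite rows covers the outer rows of the adjacent blocks. Tiling $G_{3,n}$ with $\lfloor n/4\rfloor$ copies of $B$ contributes $5\lfloor n/4\rfloor$ to the weight, and the residue $n \bmod 4$ controls the correction in the remaining columns; this is precisely the arithmetic that produces the split between the floor and the ceiling. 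Concretely, one treats the four residue classes $n\equiv 0,1,2,3\pmod{4}$ separately, each time extending the final block by one to three columns of carefully chosen labels so that every vertex assigned $0$ has either a neighbour labelled $2$ or two neighbours labelled $1$.

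For part (b), the analogous strategy on $G_{4,n}$ uses a repeating block of width $3$ and weight $5$. A reasonable choice places a $2$ on one of the inner vertices of a single column together with a $1$ on each end of an adjacent column, yielding average weight $5/3$ per column; the boundary rows of the $4\times 3$ block are covered by the vertical edges inside the column of the $2$ and by the two $1$'s. The three residue classes $n\equiv 0,1,2\pmod{3}$ are treated separately and glued to the tiled interior. The exceptional cases $n\in\{2,3,6\}$ in (a) and $n\in\{2,3,5,6,9\}$ in (b) are handled one by one: for each such $n$, the short length of the grid allows replacing one block copy with a slightly more economical local configuration whose boundary effects recover a single unit of weight, reducing the bound from the ceiling to the floor.

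The main obstacle will be guessing the right blocks and boundary patches so that verification of the Roman $\{2\}$-dominating property becomes essentially mechanical. Particular care is needed for vertices lying on the seam between two blocks, or between a block and a boundary patch, to ensure that no weight is wasted and that no vertex labelled $0$ is left with only a single neighbour of positive label. A secondary tedious issue is organising the case analysis cleanly across the residues modulo $4$ (respectively modulo $3$) and across the exceptional small $n$'s, presumably by displaying each construction as a figure. Once the blocks and patches are fixed, the weight count reduces to a direct arithmetic verification that yields exactly $\lceil(5n+3)/4\rceil$ and $\lceil(5n+4)/3\rceil$, or their floor versions in the listed exceptions.
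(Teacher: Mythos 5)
Your overall strategy --- tile the grid with a periodic block of the right average weight, correct at the boundary according to the residue of $n$, and treat the listed small $n$ by hand --- is exactly the strategy of the paper. The difficulty is that for this proposition the strategy \emph{is} trivial and the content is entirely in exhibiting blocks that actually work, which you defer; and the one concrete block you do sketch for $G_{3,n}$ does not work. You describe a width-$4$ block carrying a $2$ on the middle row of one column and two $1$'s on the outer rows of another column: first, those labels sum to $4$, not to the weight $5$ you claim for the block, and a weight of $4$ per four columns would give $\gamma_{\{R2\}}(G_{3,n})\leqslant n+O(1)$, which is impossible (a label $2$ dominates at most $5$ vertices of $G_{3,n}$, so the weight is at least $6n/5$). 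Second, the coverage claim fails: a $2$ at $(2,c)$ dominates only $(2,c\pm 1)$ in the flanking columns, so the corner vertices $(1,c\pm 1)$ and $(3,c\pm 1)$ are left with at most one positively labelled neighbour, violating the Roman $\{2\}$-condition. The paper avoids this by putting the $2$'s on the \emph{outer} rows, at positions $(1,4t)$ for odd $t$ and $(3,4t)$ for even $t$ (working from the base case $n=4k-1$ and adding at most three columns otherwise), with the $1$'s distributed on the complement of the set of vertices at distance $1$, $2$ or $4$ from a $2$. For part (b) the paper likewise does not use a single repeating tile: it glues together three distinct explicit $4\times 3$ configurations $A$, $B$, $C$ in prescribed multiplicities $n(A)=k-i$, $n(C)=i-1$, $n(B)=1$ for $n=3k+i$. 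Until you produce explicit labelings with verified coverage (including the seams and the exceptional $n$), the proof is not there.
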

\begin{proof}
Suppose that $v_{ij}$ is the vertex in the row $i$ and column $j$ for $1\leqslant i\leqslant m$ and $1\leqslant j\leqslant n$ in $G_{m, n}$. In each part we give a complete explanation about a basic case of the product and then we can obtain upper cases using it.  
For $(a)$, for $n=2, 3, 6$, it is sufficient to use part (ii) of Theorem \ref{thm:2}. 
Now, let $n=4k-1$ for some positive integer $k\geqslant 2$. We define a Roman $\{2\}$-dominating function $f=(V_0, V_1, V_2)$ such that $v_{ij}\in V_2$ for $j=4t$ for some positive integer $1\leqslant t\leqslant k-1$, such that $i=1$ if $t$ is odd, otherwise $i=3$.
Also, 
$$V_0=\{v_{ij}: d(v_{ij}, v)=1, 2, 4,~ \text{for some}~v\in V_2,~\text{and}~1\leqslant i\leqslant 3,~1\leqslant j\leqslant n\},$$ 
where $d(v_{ij}, v)$ is the length of shortest path between two vertices $v_{ij}$ and $v$. The label of other vertices is $1$. Hence, $w(f)=5k$. For $n\neq 4k-1$ we obtain the result by adding at most $3$ columns to the case $n=4k-1$. By adding the first column to the case $n=4k-1$, one can keep the previous assignments (in case $n=4k-1$). Set the label of $v_{2(4k)}$, $1$ and the label of the other two added vertices are zero. One can repeat this step by adding a column before the first column in case $n=4k-1$. Now, we have to check this method for adding the third column. It is easy to find a Roman $\{2\}$-dominating set if we add two vertices with label $1$. Finally, $\gamma_{\{R2\}}(G_{3, n})\leqslant \lceil \frac{5n+3}{4}\rceil$.
For $(b)$, in graphs A, B and C given in Fig. \ref{product}, a star, a black circle and a white circle denote a vertex with label $2, 1$ and $0$, respectively. We want to construct $G_{4, n}$ for $n\geqslant 7$ by merging a number of graphs A, B and C. When two of these graphs merge with each other, two of the columns turns into one column in the obtained graph. 
One can prove the first part using Theorem \ref{thm:2} and graphs in Fig. \ref{product}. 
Suppose that $n(A), n(B)$ and $n(C)$ are the number of used $A, B$ and $C$ in $G_{4, n}$, respectively. Consider $n=3k+i$ for some positive integers $k$ and $i$ such that $1\leqslant i\leqslant 3$. For $G_{4, n}$ assign $n(A)=k-i, n(C)=i-1$ and $n(B)=1$ (except for $n=9$, $n(B)=0$). Note that when $k\leqslant i$, we set $n(A)=0$. 
\end{proof}
\begin{figure}[!htb]
\minipage{0.90\textwidth}
\includegraphics[width=\linewidth]{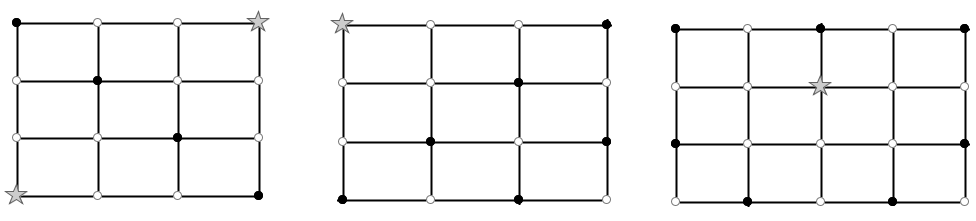}
\caption{Graphs A, B and C, respectively}\label{product}
\endminipage
\end{figure}

\vspace{2 cm}


\vspace{-0.25cm}

\end{document}